\newcommand{\bel}[1]{\begin{equation}\label{#1}}
\newcommand{\be}{\begin{equation}}
\newcommand{\ba}{\begin{eqnarray}}
\newcommand{\ea}{\end{eqnarray}}
\newcommand{\qe}{\end{equation}}
\newcommand{\R}{{\mathbb R}}
\newcommand{\N}{{\mathbb N}}
\newcommand{\Z}{{\mathbb Z}}
\newcommand{\HH}{\mathcal{H}}
\newcommand{\y}{(\Omega)}
\newcommand{\diam}{\mathrm{diam}}
\newcommand{\Hmm}[1]{\leavevmode{\marginpar{\tiny%
$\hbox to 0mm{\hspace*{-0.5mm}$\leftarrow$\hss}%
\vcenter{\vrule depth 0.1mm height 0.1mm width \the\marginparwidth}%
\hbox to
0mm{\hss$\rightarrow$\hspace*{-0.5mm}}$\\\relax\raggedright #1}}}
\newtheorem{theorem}{Theorem}[section]
\newtheorem{con}[theorem]{Conjecture}
\newtheorem{lemma}[theorem]{Lemma}
\newtheorem{corollary}[theorem]{Corollary}
\newtheorem{definition}[theorem]{Definition}
\newtheorem{remark}[theorem]{Remark}
\newtheorem{prop}[theorem]{Proposition}
\newcommand{\tm}{\begin{theorem}}
\newcommand{\tmd}{\end{theorem}}
\newcommand{\co}{\begin{corollary}}
\newcommand{\cod}{\end{corollary}}
\newcommand{\prp}{\begin{prop}}
\newcommand{\prpd}{\end{prop}}
\begin{document}

\title[Discrete harmonic functions on infinite penny graphs]{Discrete harmonic functions on infinite penny graphs}


\author{Bobo Hua}
\email{bobohua@fudan.edu.cn}
\address{Bobo Hua: School of Mathematical Sciences, LMNS, Fudan University, Shanghai 200433, China; Shanghai Center for Mathematical Sciences, Fudan University, Shanghai 200433, China}


\begin{abstract} 

In this paper, we study discrete harmonic functions on infinite penny graphs. For an infinite penny graph with bounded facial degree, we prove that the volume doubling property and the Poincar\'e inequality hold, which yields the Harnack inequality for positive harmonic functions. Moreover, we prove that the space of polynomial growth harmonic functions, or ancient solutions of the heat equation, with bounded growth rate has finite dimensional property.
\end{abstract}
\maketitle

Mathematics Subject Classification 2010: 05C10, 31C05.


\par
\maketitle

\bigskip


\section{Introduction}


In geometric graph theory, penny graphs are contact graphs of unit circles. They are the graphs formed by arranging pennies in a non-overlapping way on the plane, making a vertex for each penny, and making an edge for each two pennies that touch. Finite penny graphs are extensively studied in the literature, to cite a few \cite{Harborth74,Pollack85,Kupitz94,Pach95,Pach96,Csizmadia98,Pisanski00,Hlin01,Swanepoel09,Cerioli11,Eppstein18}. In this paper, we study infinite penny graphs and the function theory defined on them.

Let $\{C_i\}_{i=1}^\infty$ be a collection of circles of radius $\frac12,$ which are the boundaries of open disks $\{B_i\}_{i=1}^\infty,$ called pennies, in $\R^2,$ such that $B_i\cap B_j=\emptyset,$ for any $i\neq j.$ Let $(V,E)$ be the contact graph of this configuration, i.e. $V=\{v_i\}_{i=1}^\infty$
where each $v_i$ represents the circle $C_i,$ and $\{v_i,v_j\}\in E$ if and only if $C_i$ and $C_j$ are tangent to each other. We call $(V,E)$ the penny graph of the configuration $\{C_i\}_{i=1}^\infty.$ Note that it is a locally finite, simple, undirected graph. For any $x\in V,$ we denote by $\deg(x)$ the vertex degree of $x.$ For a penny graph, by the Euclidean geometry,
$$\deg(x)\leq 6,\quad \forall x\in V.$$  

This graph has a natural geometric realization.
We write $$\phi:V\to \R^2, \phi(v_i)=c_i,$$ where $c_i$ is the center of $C_i.$ For each edge $\{v_i,v_j\},$ we draw a segment $\overline{c_ic_j}$ connecting the centers, and write $\phi(\{v_i,v_j\})=\overline{c_ic_j}.$ This yields a geometric embedding of $(V,E)$ into $\R^2,$ and induces a CW complex structure, denote by $G=(V,E,F).$ Here $F$ is the set of faces, which corresponds to connected components of the complement of the embedding image of $(V,E).$ For any $\sigma\in F,$ we denote by $\deg(\sigma)$ the facial degree of $\sigma.$ In this paper, a \emph{penny graph} refers to the CW complex structure $G=(V,E,F)$ induced by a configuration of non-overlapping open disks of diameter $1.$  A penny graph $G=(V,E,F)$ is called connected if the 1-skeleton $(V,E)$ is connected.
In this paper, we only consider connected penny graphs.

For a general planar graph, the combinatorial curvature at vertices resembles the Gaussian curvature for smooth surfaces \cite{MR0279280,Stone76,Gromov87,Ishida90}. We studied geometric and analytic properties of infinite planar graphs
with nonnegative curvature in \cite{HJLcrelle15}. In particular, we proved the volume doubling property and the Poincar\'e inequality, see Definition~\ref{defi:vd} and Definition~\ref{defi:poi}, for such graphs. For a graph $(V,E),$ the combinatorial Laplacian $\Delta$ is defined as, for any $f:V\to \R,$
$$\Delta f(x):=\sum_{y} (f(y)-f(x)),\quad \forall x\in V,$$ where the summation is taken over neighbours of $x.$ A function $f:V\to \R$ is called harmonic if $\Delta f\equiv 0.$ See Figure~\ref{fig1} and Figure~\ref{fig2} for some harmonic functions on penny graphs. Various Liouville type theorems for harmonic functions were proved via the volume doubling property and the Poincar\'e inequality. Following the strategy in \cite{HJLcrelle15}, we study geometric and analytic properties of infinite planar graphs in this paper.
 \begin{figure}[htbp]
 \begin{center}
   \begin{tikzpicture}
    \node at (0,0){\includegraphics[width=0.5\textwidth]{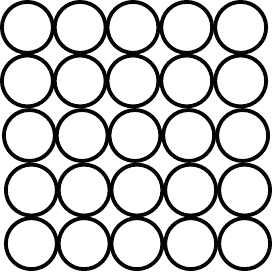}};
    \node at (0,   0){\Large $0$};
    \node at (1.25,  0){\Large $1$};
    \node at (2.5,  0){\Large $2$};
    \node at (-1.25,  0){\Large $-1$};
    \node at (-2.5,  0){\Large $-2$};
    
    \node at (0,   1.25){\Large $1$};
    \node at (1.25,  1.25){\Large $2$};
    \node at (2.5,  1.25){\Large $3$};
    \node at (-1.25,  1.25){\Large $0$};
    \node at (-2.5,  1.25){\Large $-1$};
    
        \node at (0,   2.5){\Large $2$};
    \node at (1.25,  2.5){\Large $3$};
    \node at (2.5,  2.5){\Large $4$};
    \node at (-1.25,  2.5){\Large $1$};
    \node at (-2.5,  2.5){\Large $0$};

    \node at (0,   -1.25){\Large $-1$};
    \node at (1.25,  -1.25){\Large $0$};
    \node at (2.5, -1.25){\Large $1$};
    \node at (-1.25, -1.25){\Large $-2$};
    \node at (-2.5,  -1.25){\Large $-3$};

        \node at (0,   -2.5){\Large $-2$};
    \node at (1.25,  -2.5){\Large $-1$};
    \node at (2.5,  -2.5){\Large $0$};
    \node at (-1.25,  -2.5){\Large $-3$};
    \node at (-2.5,  -2.5){\Large $-4$};

   \end{tikzpicture}
  \caption{A harmonic function $f(x_1,x_2)=x_1+x_2,$ $(x_1,x_2)\in \R^2,$ on the square lattice.}
\label{fig1}
 \end{center}
\end{figure}

\begin{figure}[htbp]
 \begin{center}
   \begin{tikzpicture}
    \node at (0,0){\includegraphics[width=0.55\textwidth]{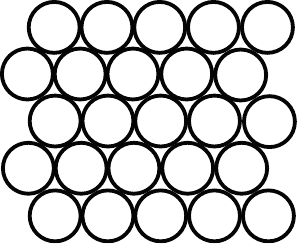}};
   \node at (0.3,   0){\Large $0$};
    \node at (1.55,  0){\Large $1$};
    \node at (2.8,  0){\Large $2$};
    \node at (-0.95,  0){\Large $-1$};
    \node at (-2.2,  0){\Large $-2$};
    
       \node at (0.3,   2.23){\Large $3$};
    \node at (1.55,  2.23){\Large $4$};
    \node at (2.8,  2.23){\Large $5$};
    \node at (-0.95,  2.23){\Large $2$};
    \node at (-2.2,  2.23){\Large $1$};

           \node at (-0.35,   1.11){\Large $1$};
    \node at (0.9,  1.11){\Large $2$};
    \node at (2.15,  1.11){\Large $3$};
    \node at (-1.6,  1.11){\Large $0$};
    \node at (-2.85,  1.11){\Large $-1$};

        \node at (-0.35,   -1.11){\Large $-2$};
    \node at (0.9,  -1.11){\Large $-1$};
    \node at (2.15,  -1.11){\Large $0$};
    \node at (-1.6,  -1.11){\Large $-3$};
    \node at (-2.85,  -1.11){\Large $-4$};

           \node at (0.3,   -2.23){\Large $-3$};
    \node at (1.55,  -2.23){\Large $-2$};
    \node at (2.8,  -2.23){\Large $-1$};
    \node at (-0.95,  -2.23){\Large $-4$};
    \node at (-2.2,  -2.23){\Large $-5$};

   \end{tikzpicture}
  \caption{A harmonic function on the triangular lattice.}
\label{fig2}
 \end{center}
\end{figure}

We denote by $\Z^2$ the integer lattice graph in $\R^2$ with the set of vertices $\Z^2=\{(x_1,x_2):x_1,x_2\in \Z\}$ and the set of edges $\{\{w,z\}\in E: w,z\in \Z^2, |w-z|=1\}.$ For any graph $(V,E),$ we denote by $d_{V}$ the combinatorial distance on the graph. We prove that any infinite penny graph with bounded facial degree is quasi-isometric to $\Z^2,$ see Definition~\ref{def:quasi}. 
\tm\label{thm:m1}Let $G=(V,E,F)$ be an infinite penny graph such that $\sup_{\sigma\in F}\deg(\sigma)<\infty.$ Then the metric spaces $(V,d_{V})$ and $(\Z^2,d_{\Z^2})$ are quasi-isometric. 
\tmd

Since the volume doubling property and the Poincar\'e inequality are quasi-isometric invariants \cite{MR1363211}, we prove the following theorem.
\tm\label{thm:main1}Let $G=(V,E,F)$ be an infinite penny graph such that $\sup_{\sigma\in F}\deg(\sigma)\leq D<\infty.$ Then the volume doubling property and the Poincar\'e inequality hold with constants depending on $D.$
\tmd

For an infinite penny graph with bounded facial degree, by Theorem~\ref{thm:main1}, we prove the following analytic properties for harmonic functions: the recurrence of the simple random walk, the Harnack inequality for positive harmonic functions, and the finite dimensionality property for harmonic functions (and ancient solutions to the heat equation) of polynomial growth.

\textbf{Acknowledgements.} This is dedicated to Professor J\"urgen Jost on the occasion of his 65th birthday. The author is supported by NSFC, no.11831004 and no. 11926313.

 \section{Preliminaries}
For a simple, undirected graph $(V,E),$ two vertices $x,y$ are called neighbors, denote by $x\sim y,$ if there is an edge connecting $x$ and $y.$ The vertex degree of $x,$ $\deg(x),$ is the number of neighbors of $x.$  The combinatorial distance $d$ on the graph is defined as, for any $x,y\in V$ and $x\neq y,$ $$d(x,y):=\inf\{n\in\N: \exists\{x_{i}\}_{i=1}^{n-1}\subset V, x\sim x_1\sim\cdots \sim x_{n-1}\sim y \}.$$  For any $x\in V$ and $R\geq 0,$ we denote by $$B_R(x):=\{y\in V: d(y,x)\leq R\}$$ the ball of radius $R$ centered at $x.$ We denote by $|\cdot |$ the counting measure on $V,$ i.e. for any $\Omega\subset V,$ $|\Omega|$ denotes the number of vertices in $\Omega.$
 The triple $(V,d,|\cdot|)$ is a discrete metric measure space. For any $\Omega\subset V,$ we denote by $$\delta\Omega:=\{y\in V\setminus \Omega: \exists x\in \Omega, y\sim x\}$$ the vertex boundary of $\Omega.$ We write $\overline{\Omega}:=\Omega\cup \delta\Omega.$ For any function $f:\overline{\Omega}\to\R,$ the Laplacian of $f$ is defined as
$$\Delta f(x):=\sum_{y:y\sim x} (f(y)-f(x)),\quad \forall x\in \Omega.$$

For simplicity, for any $A,B\in \R^2$ we denote by $\overline{AB}$ the segment connecting $A$ and $B,$ and by $|AB|:=|A-B|$ the Euclidean distance between $A$ and $B.$

Let $G=(V,E,F)$ be a connected penny graph, with the embedding $\phi:\{V,E\}\to \R^2.$ Note that for any two vertices $x,y,$ $x\sim y$ if and only if $|\phi(x)\phi(y)|=1,$ and $x\not\sim y$ if and only if $|\phi(x)\phi(y)|>1.$ The embedding map $\phi$ determines the embedding of vertices and edges, hence induces the embedding of faces. For simplicity, we denote by $\phi(\sigma)$ the embedding image of $\sigma \in F$. By the Euclidean geometry, $\phi(\sigma)$ is a polygonal domain, whose boundary is a piecewise linear curve consisting of embedding images of edges, see $(1)$ in Proposition~\ref{prop:red} below. The facial degree of $\sigma,$ $\deg(\sigma)\in \N\cup\{\infty\},$ is the number of interior angles (or corners) of the polygonal domain $\phi(\sigma).$ For a general infinite penny graph, it is possible that there are some faces of infinite facial degree. In this paper, we always consider infinite penny graphs with the property that each face has finite facial degree.




 Note that for a penny graph $G=(V,E,F),$ for any $x,y\in V,$ the combinatorial distance $d(x,y)$ is given by the induced distance between $\phi(x)$ and $\phi(y)$ on the embedding image of the graph $(V,E).$

By the Euclidean geometry, we derive some properties for polygonal domains corresponding to embedding images of faces.
\begin{prop}\label{prop:red} Let $A,B,C,D$ be four points in $\R^2$ such that $|AB|=|CD|=1,$ $\min\{|AC|,|AD|,|BC|,|BD|\}>1.$ Then 
\begin{enumerate}
\item $\overline{AB}\cap \overline{CD}=\emptyset,$ and
\item
$$\min_{\substack{x\in \overline{AB},\\y\in \overline{CD}}}|xy|=\min\left\{\min_{\substack{x\in \{A,B\},\\y\in \overline{CD}}}|xy|,\min_{\substack{x\in \overline{AB},\\y\in \{C,D\}}}|xy|\right\}.$$
\end{enumerate}
\end{prop}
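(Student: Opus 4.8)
The plan is to treat the two assertions separately, using only elementary plane geometry for (1) and a short compactness plus first-order optimality argument for (2).

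For (1) I would argue by contradiction. Suppose $p\in\overline{AB}\cap\overline{CD}$. Then $p$ lies between $A$ and $B$ on $\overline{AB}$ and between $C$ and $D$ on $\overline{CD}$, so $|Ap|+|pB|=|AB|=1$ and $|Cp|+|pD|=|CD|=1$. Adding the four triangle inequalities $|AC|\le|Ap|+|pC|$, $|AD|\le|Ap|+|pD|$, $|BC|\le|Bp|+|pC|$, $|BD|\le|Bp|+|pD|$ gives $|AC|+|AD|+|BC|+|BD|\le 2(|Ap|+|Bp|)+2(|Cp|+|Dp|)=4$, which contradicts the hypothesis that each of the four distances is strictly larger than $1$.

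For (2), note first that by (1) the segments $\overline{AB}$ and $\overline{CD}$ are disjoint compact sets, so $d^{*}:=\min_{x\in\overline{AB},\,y\in\overline{CD}}|xy|$ is attained at some pair $(x_{0},y_{0})$ with $d^{*}>0$, hence $x_{0}\neq y_{0}$. Since the right-hand side of (2) is the minimum of $|xy|$ over a subset of $\overline{AB}\times\overline{CD}$, it automatically dominates $d^{*}$; so it suffices to exhibit one minimizing pair with $x_{0}\in\{A,B\}$ or $y_{0}\in\{C,D\}$. If $(x_{0},y_{0})$ already has this form, we are done. Otherwise $x_{0}$ lies in the relative interior of $\overline{AB}$ and $y_{0}$ in that of $\overline{CD}$, so the first-order optimality condition for the squared distance along $\overline{AB}$ (and, symmetrically, along $\overline{CD}$) forces $\langle x_{0}-y_{0},B-A\rangle=\langle x_{0}-y_{0},D-C\rangle=0$; since $x_{0}-y_{0}\neq 0$ and we are in the plane, this means $\overline{AB}\parallel\overline{CD}$. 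Choosing coordinates with $A=(0,0)$, $B=(1,0)$ and, after relabeling $C,D$ if needed, $C=(p,h)$, $D=(p+1,h)$ (using $|CD|=1$ and parallelism), the orthogonality conditions say $\overline{x_{0}y_{0}}$ is vertical, so $x_{0}=(s,0)$, $y_{0}=(s,h)$ with $s\in(0,1)\cap(p,p+1)$; in particular $p\in(-1,1)$ and $d^{*}=|h|$. If $p\le 0$ then $(0,h)\in\overline{CD}$ and $|A-(0,h)|=|h|=d^{*}$; if $p>0$ then $(p,0)\in\overline{AB}$ and $|C-(p,0)|=|h|=d^{*}$. In either case a minimizing pair of the required form exists, so the right-hand side of (2) equals $d^{*}$.

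The only real subtlety is in (2): one has to observe that a closest pair interior to both segments can occur only when the segments are parallel, and then check that in the parallel case an endpoint-restricted pair still realizes the global minimum distance. The hypothesis $\min\{|AC|,|AD|,|BC|,|BD|\}>1$ is used only through (1), to guarantee that the two segments are disjoint — which is what makes $d^{*}$ positive and the variational argument available; everything else is routine coordinate bookkeeping.
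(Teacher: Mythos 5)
Your proof is correct and follows essentially the same route as the paper: part (1) is a triangle-inequality contradiction through the intersection point (you sum all four inequalities where the paper uses the two nearest endpoints, each within distance $\frac12$ of the intersection), and part (2) uses the same compactness-plus-first-order-optimality argument, concluding that an interior-interior minimizer forces the segments to be parallel and then sliding the minimizing pair to an endpoint. Your coordinate bookkeeping in the parallel case just makes explicit the paper's informal "move $\overline{x_0y_0}$ along the direction of $\overline{AB}$" step.
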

\begin{proof} For the first assertion, suppose that it is not true. Pick a point $H\in \overline{AB}\cap \overline{CD}.$ Without loss of generality, we assume that
$$|AH|=\min\{|AH|,|BH|\}, |CH|=\min\{|CH|,|DH|\}.$$ Hence $|AH|\leq \frac12$ and $|CH|\leq \frac12.$ By the triangle inequality, this yields that 
$$|AC|>1\geq |AH|+|CH|.$$ This is a contradiction. We prove the first assertion.

For the second assertion, choose $x_0\in \overline{AB}, y_0\in \overline{CD}$ such that \begin{equation}\label{eq:minimal}|x_0y_0|=\min_{\substack{x\in \overline{AB},\\y\in \overline{CD}}}|xy|.\end{equation} By the above claim, $x_0\neq y_0.$ If $x_0\in\{A,B\}$ or $y_0\in\{C,D\},$ then the result holds. Hence it suffices to consider the case that $x_0\in \overline{AB}\setminus \{A,B\}$ and $y_0\in \overline{CD}\setminus \{C,D\}.$ By the minimality in \eqref{eq:minimal},
$\overline{x_0y_0}$ is perpendicular to $\overline{AB}$ and $\overline{CD}.$ So that $\overline{AB}$ and $\overline{CD}$ are parallel. Hence one can move $\overline{x_0y_0}$ along the direction of $\overline{AB}$ to reach the ends of $\overline{AB}$ or $\overline{CD}.$ That is, one can find $x_0'\in \overline{AB}, y_0'\in \overline{CD}$ such that $x_0'\in\{A,B\}$ or $y_0'\in\{C,D\},$ and
$$|x_0'y_0'|=|x_0y_0|.$$ This proves the proposition.
\end{proof}

\begin{prop}\label{prop:triangle} Let $A,B,C$ be three points in $\R^2$ such that $|AB|=1,$ $\min\{|AC|,|BC|\}>1.$ Then 
$$\min_{x\in \overline{AB}}|xC|\geq\frac{\sqrt{3}}{2}.$$
\end{prop}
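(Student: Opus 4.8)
The plan is to reduce to the worst case and then do elementary planar geometry. Let $M := \min_{x \in \overline{AB}} |xC|$, achieved at some $x_0 \in \overline{AB}$. If $x_0 \in \{A,B\}$, then $M = \min\{|AC|,|BC|\} > 1 > \tfrac{\sqrt3}{2}$ and we are done, so we may assume $x_0$ lies in the open segment. Then the segment $\overline{x_0 C}$ is perpendicular to $\overline{AB}$, i.e.\ $x_0$ is the foot of the perpendicular from $C$ to the line through $A$ and $B$, and $M = \mathrm{dist}(C, \text{line } AB)$.

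Next I would set up coordinates: place $A = (-\tfrac12, 0)$ and $B = (\tfrac12, 0)$, so the line $AB$ is the $x$-axis and $\overline{AB} = [-\tfrac12,\tfrac12] \times \{0\}$. Write $C = (s, h)$ with $h = M \geq 0$ (by the previous paragraph, $C$ projects into the open segment, so $|s| < \tfrac12$; actually I only need $|s| \le \tfrac12$). The constraints are $|AC|^2 = (s+\tfrac12)^2 + h^2 > 1$ and $|BC|^2 = (s - \tfrac12)^2 + h^2 > 1$. Adding these two inequalities gives $2s^2 + \tfrac12 + 2h^2 > 2$, hence $h^2 > \tfrac34 - s^2$. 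Since $|s| \le \tfrac12$ forces $s^2 \le \tfrac14$, we get $h^2 > \tfrac34 - \tfrac14 = \tfrac12$... which is not quite the bound claimed. Let me instead use just one inequality more carefully: if, say, $s \ge 0$ then $|AC| \ge |BC|$, and the binding constraint is $|BC|^2 = (s-\tfrac12)^2 + h^2 > 1$; combined with $(s-\tfrac12)^2 \le \tfrac14$ (since $0 \le s \le \tfrac12$ gives $-\tfrac12 \le s - \tfrac12 \le 0$) this yields $h^2 > 1 - \tfrac14 = \tfrac34$, so $M = h > \tfrac{\sqrt3}{2}$; the case $s \le 0$ is symmetric. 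Actually, since the problem asks for $\ge \tfrac{\sqrt3}{2}$, the strict inequality is more than enough, and the extremal configuration (equilateral-triangle-like) shows the constant is sharp.

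The only subtlety — and thus the ``main obstacle,'' though it is mild — is justifying the perpendicularity claim: if the minimizing point $x_0$ is interior to $\overline{AB}$, then by first-order optimality $\overline{x_0 C} \perp \overline{AB}$, and conversely if the perpendicular foot from $C$ lands outside $\overline{AB}$ then the minimum is attained at an endpoint, where the bound $> 1 > \tfrac{\sqrt3}{2}$ is trivial. Once this case split is in place, the coordinate computation above finishes it. I would also remark that one can avoid coordinates entirely: drop the perpendicular from $C$ to line $AB$ with foot $x_0 \in \overline{AB}$; then $A$ and $B$ lie on opposite sides of (or one coincides with) $x_0$ is false in general, but in any case $\min\{|x_0A|,|x_0B|\} \le \tfrac12|AB| = \tfrac12$ is wrong too — so the clean route really is: whichever of $A,B$ is farther in the projection direction, say $B$, satisfies $|x_0 B| \le \tfrac12$, and then $1 < |BC|^2 = |x_0B|^2 + |x_0C|^2 \le \tfrac14 + M^2$, giving $M^2 > \tfrac34$. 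This is the argument I would write up.
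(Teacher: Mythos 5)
Your argument is correct, and it reaches the conclusion by a genuinely different (and more elementary) route than the paper. The paper first observes that one may assume both base angles are acute (else the minimum is attained at $A$ or $B$ and is $>1$), then applies the law of cosines to show $\cos\angle ACB=\frac{a^2+b^2-1}{2ab}\geq\frac12$, hence $\angle ACB\leq\frac{\pi}{3}$, and finally analyzes how the altitude $CD$ splits that angle: since $\angle ACD+\angle BCD\leq\frac{\pi}{3}$, one of the two cosines is at least $\cos\frac{\pi}{6}$, giving $|CD|\geq\frac{\sqrt3}{2}\min\{a,b\}$. You instead go straight to the foot $x_0$ of the perpendicular: once it lies on the segment, $|x_0A|+|x_0B|=1$ forces $\min\{|x_0A|,|x_0B|\}\leq\frac12$, and the Pythagorean theorem in the corresponding right triangle gives $1<\frac14+M^2$, i.e. $M>\frac{\sqrt3}{2}$. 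Your version avoids the law of cosines and any angle bookkeeping, and it also makes the sharpness of the constant (the near-equilateral configuration) transparent; the paper's version yields the slightly stronger conclusion $|CD|\geq\frac{\sqrt3}{2}\min\{|AC|,|BC|\}$ as a byproduct, though that extra strength is not used. One stylistic caution: the final write-up should drop the exploratory false starts (the averaged inequality giving only $h^2>\frac12$, and the garbled sentence about which endpoint is ``farther in the projection direction''), since the clean statement you actually need is simply that the nearer endpoint is within distance $\frac12$ of $x_0$; as written, those detours could confuse a reader even though the coordinate computation that follows is airtight.
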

\begin{proof} It suffices to consider that the angles $\angle CAB$ and $\angle CBA$ are acute. Otherwise, $\min_{x\in \overline{AB}}|xC|=\min\{|AC|,|BC|\}\geq 1$ which yields the result. 

Let $a = |BC|, b = |CA|, c = |AB|$ be the side lengths. By the cosine rule, 
$$\cos \angle ACB=\frac{a^2+b^2-c^2}{2ab}\geq \frac{1}{2}.$$ Hence $\angle ACB\leq \frac{\pi}{3}.$
We denote by $CD$ the altitude from the vertex $C$ to the base $AB,$ where $D$ is the foot of the altitude.  Obviously, $|CD|=\min_{x\in \overline{AB}}|xC|.$ In the right triangles, $\Delta ACD$ and $\Delta BCD,$ $$\frac{|CD|}{\min\{a,b\}}=\max\{\cos \angle ACD,\cos \angle BCD\}.$$ Note that $\angle ACD+\angle BCD=\angle ACB\leq \frac{\pi}{3}.$ This yields that
$$\frac{|CD|}{\min\{a,b\}}\geq\cos\frac{\pi}{6}=\frac{\sqrt 3}{2}.$$
\end{proof}

By these propositions, we have the following lemma.
\begin{lemma}\label{lem:nonadj} Let $A,B,C,D$ be four points in $\R^2$ such that $|AB|=|CD|=1,$ $\min\{|AC|,|AD|,|BC|,|BD|\}>1.$ Then for any $x\in \overline{AB}, y\in \overline{CD},$
$$|xy|\geq \frac{\sqrt 3}{2}.$$
\end{lemma}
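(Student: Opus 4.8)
The plan is to reduce the segment-to-segment distance to a point-to-segment distance using the second assertion of Proposition~\ref{prop:red}, and then to apply Proposition~\ref{prop:triangle} in each of the resulting cases. Concretely, since $|AB|=|CD|=1$ and $\min\{|AC|,|AD|,|BC|,|BD|\}>1$, the hypotheses of Proposition~\ref{prop:red} are met, so
$$\min_{\substack{x\in \overline{AB},\\y\in \overline{CD}}}|xy|=\min\left\{\min_{\substack{x\in \{A,B\},\\y\in \overline{CD}}}|xy|,\ \min_{\substack{x\in \overline{AB},\\y\in \{C,D\}}}|xy|\right\}.$$
Thus it suffices to bound each of the four quantities $\min_{y\in\overline{CD}}|Ay|$, $\min_{y\in\overline{CD}}|By|$, $\min_{x\in\overline{AB}}|xC|$, $\min_{x\in\overline{AB}}|xD|$ from below by $\tfrac{\sqrt3}{2}$.

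Next I would observe that each of these four quantities is exactly of the form handled by Proposition~\ref{prop:triangle}. For instance, to bound $\min_{y\in\overline{CD}}|Ay|$ I apply Proposition~\ref{prop:triangle} to the triple $C,D,A$: here the distinguished unit segment is $\overline{CD}$ (indeed $|CD|=1$), and the hypothesis $\min\{|CA|,|DA|\}>1$ is part of our assumption, so Proposition~\ref{prop:triangle} gives $\min_{y\in\overline{CD}}|Ay|\geq\tfrac{\sqrt3}{2}$. The same argument with $C,D,B$ gives $\min_{y\in\overline{CD}}|By|\geq\tfrac{\sqrt3}{2}$; applying it instead to $A,B,C$ and to $A,B,D$ (where now $\overline{AB}$ is the unit segment) gives $\min_{x\in\overline{AB}}|xC|\geq\tfrac{\sqrt3}{2}$ and $\min_{x\in\overline{AB}}|xD|\geq\tfrac{\sqrt3}{2}$. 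Plugging these four bounds into the displayed identity yields $\min_{x\in\overline{AB},\,y\in\overline{CD}}|xy|\geq\tfrac{\sqrt3}{2}$, which is the claim since it holds for every $x\in\overline{AB}$, $y\in\overline{CD}$.

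There is no real obstacle here: the lemma is essentially a bookkeeping combination of the two preceding propositions. The only point requiring a moment's care is verifying, in each of the four invocations of Proposition~\ref{prop:triangle}, that the hypotheses are genuinely satisfied — i.e.\ correctly identifying which of $\overline{AB}$ and $\overline{CD}$ plays the role of the unit base and checking that the two "long" distances exceed $1$; but all of these are immediate from the standing assumptions $|AB|=|CD|=1$ and $\min\{|AC|,|AD|,|BC|,|BD|\}>1$.
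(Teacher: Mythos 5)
Your proof is correct and is exactly the argument the paper intends: the paper's own proof of this lemma is the one-line statement that it follows from Proposition~\ref{prop:red} and Proposition~\ref{prop:triangle}, and your write-up simply makes explicit the reduction to the four point-to-segment distances and the four invocations of Proposition~\ref{prop:triangle}. No issues.
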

\begin{proof} This follows from Proposition~\ref{prop:red} and Proposition~\ref{prop:triangle}.
\end{proof}

For a connected penny graph, for any face $\sigma\in F$ with finite facial degree, the boundary of embedding image of $\sigma,$ $\partial \phi(\sigma),$ 
is a piecewise linear curve. For any $x,y\in \partial \phi(\sigma),$ we denote by $d_{\partial\sigma}(x,y)$ the induced distance between $x$ and $y$ in $\partial \phi(\sigma),$ i.e. the shortest length of paths connecting $x$ and $y$ in
$\partial \phi(\sigma).$ One readily sees that 
\begin{equation}\label{eq:oneface} d_{\partial\sigma}(x,y)\leq \frac{1}{2}\deg(\sigma), \quad \forall x,y\in \partial \phi(\sigma).
\end{equation} 
The diameter of $\phi(\sigma)$ is defined as, 
$$\diam(\phi(\sigma)):=\sup_{w,z\in \phi(\sigma)}|wz|.$$ One easily sees that 
$$\diam(\phi(\sigma))=\sup_{w,z\in \partial\phi(\sigma)}|wz|.$$ By \eqref{eq:oneface}, we have
\begin{equation}\label{eq:diam}\diam(\phi(\sigma))\leq \sup_{w,z\in \partial\phi(\sigma)}d_{\partial\sigma}(w,z)\leq \frac{1}{2}\deg(\sigma).\end{equation}

\begin{lemma}\label{lem:bilip}For any $\sigma\in F,$ $x,y\in \partial \phi(\sigma),$ 
$$Cd_{\partial\sigma}(x,y) \leq |xy|\leq d_{\partial\sigma}(x,y),$$ where $C=\frac{1}{2\deg(\sigma)}.$
\end{lemma}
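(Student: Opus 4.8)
The plan is to prove the bi-Lipschitz estimate \(C\,d_{\partial\sigma}(x,y)\leq |xy|\leq d_{\partial\sigma}(x,y)\) for \(x,y\in\partial\phi(\sigma)\), with \(C=\tfrac{1}{2\deg(\sigma)}\). The upper bound is immediate: \(d_{\partial\sigma}(x,y)\) is the length of a shortest path inside \(\partial\phi(\sigma)\) joining \(x\) and \(y\), and the straight segment \(\overline{xy}\) has Euclidean length \(|xy|\), which is at most the length of any curve joining \(x\) to \(y\) (in particular the boundary path). So all the work is in the lower bound.

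For the lower bound, the key point is that \(d_{\partial\sigma}(x,y)\) is already controlled: by \eqref{eq:oneface} we have \(d_{\partial\sigma}(x,y)\leq \tfrac12\deg(\sigma)\) for all \(x,y\in\partial\phi(\sigma)\). Hence it suffices to show \(|xy|\geq\tfrac1{\deg(\sigma)}\cdot d_{\partial\sigma}(x,y)\cdot\tfrac1{?}\)... more precisely, since \(d_{\partial\sigma}(x,y)\leq\tfrac12\deg(\sigma)\), it is enough to prove the uniform lower bound \(|xy|\geq \tfrac14\) — wait, let me recompute: we want \(|xy|\geq \tfrac{1}{2\deg(\sigma)} d_{\partial\sigma}(x,y)\), and \(d_{\partial\sigma}(x,y)\leq\tfrac12\deg(\sigma)\), so it suffices to show \(|xy|\geq\tfrac{1}{2\deg(\sigma)}\cdot\tfrac12\deg(\sigma)=\tfrac14\) whenever \(x\neq y\) — but that is false for \(x,y\) on the same edge and close together. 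So the reduction must be more careful: one should split into the case where \(x,y\) lie on (the embedding images of) adjacent or identical edges, and the case where they lie on non-adjacent edges.

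Concretely, I would argue as follows. Let \(x\in\phi(e)\), \(y\in\phi(e')\) for edges \(e,e'\) on the boundary cycle of \(\sigma\). If \(e=e'\) or \(e,e'\) share a vertex, then \(x\) and \(y\) lie in the union of at most two consecutive segments each of length \(1\); by an elementary planar estimate (the two segments meet at an interior angle of the polygon \(\phi(\sigma)\), so in the worst case one uses that a path of boundary-length \(\ell\le 2\) staying in two such segments has endpoints at Euclidean distance... here one can in fact just use that \(d_{\partial\sigma}(x,y)\le 2\) in this case while \(|xy|\) can be small — so this case needs that we bound \(d_{\partial\sigma}\) differently, namely observe that when \(e=e'\), \(d_{\partial\sigma}(x,y)=|xy|\), and when \(e,e'\) are adjacent sharing vertex \(A\), \(d_{\partial\sigma}(x,y)=|xA|+|Ay|\le 2\), while \(|xy|\ge\) ... ). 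The cleaner route: if \(e\) and \(e'\) are non-adjacent (no shared vertex), Lemma~\ref{lem:nonadj} gives \(|xy|\geq\tfrac{\sqrt3}{2}\), and since \(d_{\partial\sigma}(x,y)\leq\tfrac12\deg(\sigma)\) we get \(|xy|\geq\tfrac{\sqrt3}{2}\geq\tfrac{\sqrt3/2}{\deg(\sigma)/2}\,d_{\partial\sigma}(x,y)=\tfrac{\sqrt3}{\deg(\sigma)}d_{\partial\sigma}(x,y)\geq\tfrac1{2\deg(\sigma)}d_{\partial\sigma}(x,y)\) (using \(\deg(\sigma)\ge3\)). If \(e=e'\), then \(d_{\partial\sigma}(x,y)=|xy|\) and the inequality is trivial since \(C\le1\). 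If \(e\neq e'\) share a vertex \(A\), then the subpath of \(\partial\phi(\sigma)\) of length \(d_{\partial\sigma}(x,y)\) running through \(A\) has length \(|xA|+|Ay|\), so \(d_{\partial\sigma}(x,y)\le|xA|+|Ay|\le 2\); meanwhile, applying Proposition~\ref{prop:triangle} with the triangle \(xAy\) — here \(|xy|\) could be genuinely small when the angle at \(A\) is small, so instead one uses that for a triangle with \(\angle A=\theta\) and \(|xA|=p\le1\), \(|Ay|=q\le1\), one has \(|xy|\ge\sin(\theta/2)\cdot\) something; the honest statement is \(|xy|^2=p^2+q^2-2pq\cos\theta\ge (p+q)^2\sin^2(\theta/2)\cdot\tfrac12\)... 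I would instead bound \(|xy|\ge\tfrac12(|xA|+|Ay|)\sin(\theta/2)\) and then need a lower bound on the interior angle \(\theta\) of \(\phi(\sigma)\).

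The main obstacle is exactly controlling the adjacent-edge case: one needs a uniform lower bound on the interior angles of the polygon \(\phi(\sigma)\), or an argument that circumvents it. A lower bound on interior angles does hold for penny graphs because two edges at a vertex \(v\) correspond to two centers at distance \(1\) from \(\phi(v)\) and at distance \(>1\) from each other (else they would be adjacent), forcing the angle to be \(>\tfrac{\pi}{3}\); more relevantly, consecutive boundary edges of a face, being non-crossing and emanating from the same vertex with the pennies non-overlapping, subtend an angle bounded below (at least \(\tfrac{\pi}{3}\) by the \(\deg\le6\) geometry). With \(\theta>\tfrac{\pi}{3}\) one gets \(|xy|\ge\tfrac12(|xA|+|Ay|)\sin\tfrac{\pi}{6}=\tfrac14(|xA|+|Ay|)=\tfrac14 d_{\partial\sigma}(x,y)\ge\tfrac1{2\deg(\sigma)}d_{\partial\sigma}(x,y)\) provided \(\deg(\sigma)\ge2\). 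So I would isolate the interior-angle lower bound as the one genuine lemma to check, and then the proof assembles from Lemma~\ref{lem:nonadj}, Proposition~\ref{prop:triangle}, and \eqref{eq:oneface} by the three-case split just described.
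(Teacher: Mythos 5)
Your proposal is correct and follows essentially the same route as the paper: the same trichotomy (same edge, edges sharing a vertex, disjoint edges), the same use of Lemma~\ref{lem:nonadj} together with \eqref{eq:oneface} for the disjoint case, and the same fact that the angle at the shared vertex is at least $\pi/3$ (since the two neighbouring centres are at distance $1$ from the common centre and at distance $\geq 1$ from each other) for the adjacent case. The only cosmetic difference is that the paper packages the adjacent-edge estimate via the cosine rule as $|xz|^2+|yz|^2-|xz||yz|\geq \frac14(|xz|+|yz|)^2$, giving $|xy|\geq\frac12(|xz|+|yz|)\geq\frac12 d_{\partial\sigma}(x,y)$, in place of your $\sin(\theta/2)$ bound, so your ``one genuine lemma to check'' is exactly the angle bound the paper invokes in passing.
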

\begin{proof} The upper bound estimate of $|xy|$ is trivial, since the segment realizes the minimal length among the curves connecting their end-points.

For the lower bound estimate of $|xy|,$ we choose edges $e_x$ and $e_y$ incident to $\sigma$ such that 
$$x\in \overline{\phi(e_x)}, y\in \overline{\phi(e_y)},$$ where $\overline{(\cdot)}$ denotes the closure of $(\cdot).$ Note that the choices of $e_x$ and $e_y$ may not be unique, but it suffices to choose some. If $e_x=e_y,$ then $|xy|=d_{\partial\sigma}(x,y),$ which yields the result.
So that we assume that $e_x\neq e_y.$ Then we have the following two cases:

\emph{Case 1.} $\overline{\phi(e_x)}\cap  \overline{\phi(e_y)}= \emptyset.$ Then by Lemma~\ref{lem:nonadj},
$$|xy|\geq \frac{\sqrt 3}{2}.$$ Hence by \eqref{eq:oneface},
$$|xy|\geq \frac{\sqrt 3}{\deg(\sigma)}d_{\partial\sigma}(x,y).$$

\emph{Case 2.}  $\overline{\phi(e_x)}\cap  \overline{\phi(e_y)}\neq \emptyset.$ Let $z=\overline{\phi(e_x)}\cap  \overline{\phi(e_y)}.$ Then by the cosine rule in the triangle $\Delta xyz,$ noting that $\angle xzy\geq \frac{\pi}{3},$
\begin{eqnarray}|xy|^2&=&|xz|^2+|yz|^2-2|xz||yz|\cos\angle xzy\nonumber\\
&\geq &|xz|^2+|yz|^2-|xz||yz|\geq \frac14(|xz|+|yz|)^2.\label{eq:hh1}
\end{eqnarray} 
We divide it into subcases.

\emph{Case 2.1.} $|xz|+|yz|<\frac12.$ We claim that $d_{\partial\sigma}(x,y)=|xz|+|yz|.$ In fact, if the minimal path connecting $x$ and $y$ passing through another edge, different from $e_x$ and $e_y,$ on the boundary, then the length is at least $1.$ The minimality contradicts with $|xz|+|yz|<\frac12.$ This is impossible. So that the minimal path only involves the points on $e_x$ and $e_y.$ This yields the claim. Hence by \eqref{eq:hh1}, $|xy|\geq \frac12 d_{\partial\sigma}(x,y).$

\emph{Case 2.2.} $|xz|+|yz|\geq \frac12.$ Then by \eqref{eq:hh1},
$$|xy|\geq \frac14\geq \frac{1}{2\deg(\sigma)}d_{\partial\sigma}(x,y).$$

Combining the above results, we prove the lemma.

\end{proof}

\section{Harmonic functions on penny graphs}

We recall the definition of the quasi-isometry between metric spaces.
\begin{definition}[\cite{BBI01}]\label{def:quasi} We say two metric spaces $(X,d_X)$ and $(Y,d_Y)$ are quasi-isometric if there is a map $\psi:X\to Y,$ $a\geq 1,C>0$ such that 
\begin{enumerate}[(a)]
\item for any $x_1,x_2\in X,$ 
$$a^{-1} d_X(x_1,x_2)-C \leq d_Y(\psi(x_1),\psi(x_2))\leq a d_X(x_1,x_2)+C,$$ and
\item for any $y\in Y,$ there exists $x\in X$ such that $d_Y(y,\psi(x))<C.$
\end{enumerate} We call $\psi$ the quasi-isometry map.
\end{definition}

\begin{theorem}\label{thm:quasi} Let $G=(V,E,F)$ be an infinite penny graph such that $\sup_{\sigma\in F}\deg(\sigma)\leq D<\infty.$ Then for any $x,y\in V,$ $$C(D)d(x,y)\leq |\phi(x)\phi(y)|\leq d(x,y),$$ where $C(D)=\frac{1}{2D}.$ Moreover, the metric space $(V,d)$ and $(\R^2,|\cdot|)$ are quasi-isometric.
\end{theorem}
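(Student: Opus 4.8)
The plan is to establish the two-sided comparison $C(D)\,d(x,y)\le |\phi(x)\phi(y)|\le d(x,y)$ first, and then deduce the quasi-isometry statement almost for free. The upper bound $|\phi(x)\phi(y)|\le d(x,y)$ is immediate: any combinatorial geodesic from $x$ to $y$ of length $n=d(x,y)$ maps under $\phi$ to a concatenation of $n$ unit segments connecting $\phi(x)$ to $\phi(y)$, so the straight-line distance is at most $n$. The substance is the lower bound, i.e.\ a vertex-spacing estimate saying that combinatorially distant vertices are also Euclidean-distant.

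For the lower bound I would argue by contradiction on a shortest Euclidean segment realizing a path that is ``too short.'' More precisely, let $x,y\in V$ and consider the straight segment $\overline{\phi(x)\phi(y)}$ in $\R^2$. Because $\phi(G)$ is a planar CW complex, this segment is partitioned by the 1-skeleton into finitely many pieces, each piece lying in the closure of a single face $\phi(\sigma)$ and meeting $\partial\phi(\sigma)$ only at its endpoints (which lie on edges of $\sigma$). Inside a single face $\sigma$, any two boundary points $p,q\in\partial\phi(\sigma)$ satisfy $d_{\partial\sigma}(p,q)\le 2\deg(\sigma)\,|pq|\le 2D\,|pq|$ by Lemma~\ref{lem:bilip}. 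Summing along the consecutive faces crossed by the segment, and using that the induced (intrinsic) distance on the 1-skeleton $\phi(V\cup E)$ between $\phi(x)$ and $\phi(y)$ equals $d(x,y)$ (each edge has length $1$, and a path along boundaries of faces can be rerouted to a path through vertices at comparable cost), we get
$$
d(x,y)\;\le\; \text{(intrinsic 1-skeleton distance)}\;\le\; 2D\sum_i |p_iq_i| \;=\; 2D\,|\phi(x)\phi(y)|,
$$
which is exactly $C(D)d(x,y)\le|\phi(x)\phi(y)|$ with $C(D)=\tfrac1{2D}$. The one point needing care here is that a path running along face boundaries is a path in the topological space $\phi(V\cup E)$, not literally a vertex path; but its length is a sum of edge-lengths and partial-edge-lengths, and rounding each partial edge to a full edge changes the count by a bounded factor — alternatively one observes that the intrinsic length of such a boundary path is an integer plus at most $2$ fractional contributions at the two ends, and a boundary path of intrinsic length $L$ contains a vertex path of length $\le L+1\le 2L$ for $L\ge 1$ (the case $L<1$ being trivial since then $x=y$ or $x\sim y$).

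Granting the comparison, the quasi-isometry of $(V,d)$ with $(\R^2,|\cdot|)$ follows by taking $\psi=\phi\colon V\to\R^2$: condition (a) of Definition~\ref{def:quasi} holds with $a=2D$ and $C=0$ directly from the displayed inequalities (the additive constant is not even needed), and condition (b) holds because the pennies, being open disks of diameter $1$, together with the faces they bound, cannot leave a disk of radius $>1$ (say) uncovered — more carefully, for any point $w\in\R^2$ either $w$ lies in some face $\phi(\sigma)$, whence by \eqref{eq:diam} it is within $\tfrac12\deg(\sigma)\le D/2$ of a vertex of that face, or $w$ lies on the embedded $1$-skeleton, whence it is within $\tfrac12$ of a vertex. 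Thus every point of $\R^2$ is within distance $D$ of $\phi(V)$, giving (b) with $C=D$. I expect the main obstacle to be the careful bookkeeping in the lower-bound argument: namely making rigorous the reduction from ``Euclidean segment between two vertices'' to ``face-by-face boundary path'' and controlling the conversion from a boundary path to an honest vertex path, which is where Lemma~\ref{lem:bilip} (and hence the bounded facial degree hypothesis) is used in an essential way.
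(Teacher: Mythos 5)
Your proposal is correct and follows essentially the same route as the paper's proof: decompose the straight segment $\overline{\phi(x)\phi(y)}$ into pieces lying in closures of faces, apply Lemma~\ref{lem:bilip} face by face, concatenate the resulting boundary paths, and compare with the intrinsic distance on the embedded $1$-skeleton (which the paper identifies with $d(x,y)$), with condition (b) of Definition~\ref{def:quasi} verified via the diameter bound \eqref{eq:diam} exactly as you do. The extra care you devote to converting a boundary path into a vertex path is not actually needed, since for two vertices the intrinsic $1$-skeleton distance already equals the combinatorial distance, which is the fact the paper invokes.
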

\begin{proof} For the first assertion, the upper bound estimate of $|\phi(x)\phi(y)|$ is trivial. For the lower bound estimate of $|\phi(x)\phi(y)|,$ we consider the segment $\overline{\phi(x)\phi(y)}$ connecting $\phi(x)$ and $\phi(y).$
Then there exist mutually distinct $z_k\in\overline{\phi(x)\phi(y)}\setminus\{\phi(x),\phi(y)\},$ $1\leq k\leq K$, and some faces $\sigma_k,$ $1\leq k\leq K+1,$ such that $\overline{z_{k-1}z_{k}}\subset \overline{\phi(\sigma_k)}$ for $1 \leq k\leq K+1$ and $$|\phi(x)\phi(y)|=\sum_{k=1}^{K+1}|z_{k-1}z_{k}|,$$ where we write $z_0=\phi(x)$ and $z_{K+1}=\phi(y).$ For each face $\phi(\sigma_k),$ $1\leq k\leq K+1$, Lemma~\ref{lem:bilip} yields that there exists a path $\gamma_{z_{k-1}z_k}\subset \partial \phi(\sigma_k)$ from $z_{k-1}$ to $z_k$ such that \begin{equation}\label{eq:pa1}C(D)L(\gamma_{z_{k-1}z_k})\leq |z_{k-1}z_k|,\end{equation} where $L(\cdot)$ denotes the length of the curve. Consider the concatenation of paths $\{\gamma_{z_{k-1}z_k}\}_{k=1}^{K+1},$
$$\gamma=\gamma_{z_{0}z_{1}}\circ\gamma_{z_{1}z_{2}}\circ\cdots\circ\gamma_{z_{K}z_{K+1}},$$ which starts at $z_0$ and ends at $z_{K+1}.$ Summing over $k$ in \eqref{eq:pa1}, we have
$$C(D)L(\gamma)\leq \sum_{k=1}^{K+1}|z_{k-1}z_{k}|=|\phi(x)\phi(y)|.$$ Hence by the definition of $d(x,y),$ $$C(D)d(x,y)\leq |\phi(x)\phi(y)|.$$ This proves the first assertion.

For the second assertion, it suffices to prove $(b)$ in Definition~\ref{def:quasi}. Note that for any $z\in \R^2,$ there exists $\sigma\in F$ such that $z\in \overline{\phi(\sigma)}.$ Pick a vertex $x\in \partial \sigma.$ By the diameter estimate \eqref{eq:diam}, 
$$|z\phi(x)|\leq \diam(\phi(\sigma))\leq \frac{D}{2}.$$ This proves the second assertion.

So that the embedding map $\phi:V\to \R^2$ is the quasi-isometry map between the metric space $(V,d)$ and $(\R^2,|\cdot|).$ We prove the theorem.
\end{proof}

It is well-known that 
$(\R^2,|\cdot|)$ and $(\Z^2,d_{\Z^2}),$ where $d_{\Z^2}$ is the combinatorial distance on $\Z^2$, are quasi-isometric.
Now we ready to prove Theorem~\ref{thm:m1}.
\begin{proof}[Proof of Theorem~\ref{thm:m1}] Since the quasi-isometric relation between metric spaces is an equivalent relation, see \cite{BBI01}, the theorem follows from Theorem~\ref{thm:quasi} and the fact that $(\R^2,|\cdot|)$ and $(\Z^2,d_{\Z^2})$ are quasi-isometric. 
\end{proof}


\begin{definition}\label{defi:vd}
For a graph $G=(V,E),$ we say that it satisfies
\begin{itemize}
  \item the \emph{volume doubling property} if there exists $C_1>0$ such that \begin{equation}\label{eq:1}|B_{2R}(x)|\leq C_1|B_{R}(x)|, \quad \forall x\in V, R>0;\end{equation}
  \item the \emph{weak relative volume comparison} if there exist $C_1>0, \nu>0$ such that
  \begin{equation}\label{eq:2}\frac{|B_{R}(x)|}{|B_{r}(x)|}\leq C_2 \left(\frac{R}{r}\right)^\nu, \quad \forall x\in V, R>r>0;\end{equation}


\end{itemize}
\end{definition}
Note that \eqref{eq:1} implies that \eqref{eq:2} for $\nu=\log_2 C_1.$ So that these two conditions are in fact equivalent up to some constants.

 \begin{definition}\label{defi:poi}
   We say that a graph $G=(V,E)$ satisfies the \emph{Poincar\'e inequality} if there exists a constant $C$ such that  for any $x\in V, R>0,$ and any function $f:B_{2R}\to \R,$ 
  \begin{equation}\label{eq:poincare}\sum_{y\in B_R(x)}|f(y)-\overline{f}|^2\leq CR^2\sum_{y,z\in B_{2R}(x)}|f(y)-f(z)|^2,\end{equation} where $\overline{f}=\frac{1}{|B_R(x)|}\sum_{B_R(x)}f.$
\end{definition}

By Theorem~\ref{thm:m1}, we prove that the volume doubling property and the Poincar\'e inequality holds for infinite penny graphs with bounded facial degree.
\begin{theorem}\label{thm:vdb}Let $G$ be an infinite penny graph with facial degree bounded above by $D.$ Then the volume doubling property \eqref{eq:1} holds for some $C_1(D)$ and the weak relative volume comparison holds for some $C_2(D)$ and $\nu=2$. Moreover, there are positive constants $C_3(D)$ and $C_4(D)$ such that
\begin{equation}\label{eq:qua}C_3 R^2\leq |B_R(x)|\leq C_4 R^2,\quad \forall R\geq 1.\end{equation}
\end{theorem}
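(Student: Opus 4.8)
The plan is to deduce everything from Theorem~\ref{thm:m1}, which gives that $(V,d)$ is quasi-isometric to $(\Z^2,d_{\Z^2})$, together with the fact that the volume doubling property and the Poincar\'e inequality are invariant under quasi-isometry for graphs with bounded geometry. First I would note that a penny graph has $\deg(x)\leq 6$ for all $x$, so it has uniformly bounded vertex degree; this is the regularity hypothesis needed for the quasi-isometry invariance results (e.g. \cite{MR1363211,MR1180389}). Since $\Z^2$ manifestly satisfies volume doubling \eqref{eq:1} and the Poincar\'e inequality \eqref{eq:poincare} (balls in $\Z^2$ of radius $R$ have cardinality comparable to $R^2$, and the discrete Poincar\'e inequality on $\Z^2$ is classical), transporting these properties across the quasi-isometry $\phi$ from Theorem~\ref{thm:quasi} yields the volume doubling property with constant $C_1(D)$ and, since $\log_2 C_1$ plays the role of $\nu$, the weak relative volume comparison \eqref{eq:2} with some exponent depending only on $D$.

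To pin the exponent to $\nu=2$ and to get the two-sided bound \eqref{eq:qua}, I would argue more quantitatively rather than citing a black box. The clean way is to compare volumes of balls directly using the bi-Lipschitz-type estimate of Theorem~\ref{thm:quasi}: for any $x,y\in V$ one has $\tfrac{1}{2D}\,d(x,y)\leq |\phi(x)\phi(y)|\leq d(x,y)$. Hence the combinatorial ball $B_R(x)$ satisfies
\[
\phi(B_R(x))\subseteq \overline{B}^{\,\R^2}_{R}(\phi(x)), \qquad
\phi(B_R(x))\supseteq \{\phi(y): |\phi(x)\phi(y)|\leq R/(2D)\} \cap \phi(V),
\]
so the Euclidean disk of radius $R$ around $\phi(x)$ contains all of $\phi(B_R(x))$, and the disk of radius $R/(2D)$ contains only points of $\phi(B_R(x))$. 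Since the images $\phi(v)$ are centers of disjoint disks of radius $\tfrac12$, they are $1$-separated in $\R^2$, so the number of them in any Euclidean disk of radius $\rho$ is at most $C(\rho+1)^2$ by a packing argument (disjoint disks of radius $\tfrac12$ inside a disk of radius $\rho+\tfrac12$). This gives $|B_R(x)|\leq C_4(D)R^2$ for $R\geq 1$.

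For the matching lower bound $|B_R(x)|\geq C_3(D)R^2$, I would use the second assertion of Theorem~\ref{thm:quasi}: every point of $\R^2$ is within Euclidean distance $D/2$ of some $\phi(v)$, i.e. the set $\phi(V)$ is $(D/2)$-dense. Therefore a Euclidean disk of radius $R/(2D)$ centered at $\phi(x)$ must contain at least $c(D)R^2$ points of $\phi(V)$ (cover the disk by $\asymp R^2$ subregions of diameter $\leq D$, each of which must meet $\phi(V)$), and all such points lie in $B_R(x)$ by the lower estimate of Theorem~\ref{thm:quasi}. Combining the two bounds gives \eqref{eq:qua}, and feeding $|B_{2R}(x)|\leq C_4(2R)^2 = 4C_4 R^2$ and $|B_R(x)|\geq C_3 R^2$ into \eqref{eq:1} yields the volume doubling property with $C_1 = 4C_4/C_3$ for $R\geq 1$ (the range $0<R<1$ being trivial since then $B_{2R}(x)=B_R(x)=\{x\}$ or one resolves it by absorbing into the constant). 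The relative volume comparison \eqref{eq:2} with $\nu=2$ is then immediate from \eqref{eq:qua}. For the Poincar\'e inequality one invokes the quasi-isometry invariance theorem of \cite{MR1363211} together with the known Poincar\'e inequality on $\Z^2$.

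The main obstacle I anticipate is the Poincar\'e inequality: unlike the volume bounds, it is not a pointwise statement and cannot be read off directly from the two-sided distance comparison. One must genuinely use a quasi-isometry invariance principle, which requires care about how balls and energies correspond under $\phi$ and its quasi-inverse, about the overlap multiplicity introduced when pulling back functions, and about matching the radii $R$ and $2R$ through the additive constant $C$ in the quasi-isometry inequality. The bounded vertex degree $\deg(x)\leq 6$ is precisely what controls these overlaps, so I would emphasize that point; the rest is a routine, if slightly technical, transplantation argument, and I would simply cite \cite{MR1363211} (or \cite{MR1180389}, \cite{Coulhon}) for the statement that volume doubling plus Poincar\'e pass between quasi-isometric bounded-degree graphs.
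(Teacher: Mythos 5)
Your proposal is correct, and its core is a genuinely more quantitative route than the paper's. The paper disposes of this theorem in one line: by Theorem~\ref{thm:m1} the graph is quasi-isometric to $\Z^2$, and volume growth properties (doubling, weak relative comparison, two-sided quadratic bounds) are quasi-isometry invariants for bounded-degree graphs, so they are simply transported from $\Z^2$. You mention that route, but your main argument instead derives \eqref{eq:qua} directly from the bi-Lipschitz comparison $\tfrac{1}{2D}d(x,y)\leq|\phi(x)\phi(y)|\leq d(x,y)$ of Theorem~\ref{thm:quasi}: the upper bound from the $1$-separation of the disk centers (a packing count in a Euclidean disk of radius $R$), the lower bound from the $(D/2)$-density of $\phi(V)$ in $\R^2$ (a covering count in a disk of radius $R/(2D)$), and then doubling and the comparison with $\nu=2$ fall out of \eqref{eq:qua}. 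This buys explicit constants, pins the exponent to $\nu=2$ without appeal to $\log_2 C_1$, and avoids the black-box invariance theorem entirely for the volume statements; the price is the small amount of bookkeeping you already flag (disjoint squares of side $D$ each containing a point of $\phi(V)$, the range $1\leq R\lesssim D^2$ handled by $|B_R(x)|\geq R$, and small $R$ absorbed into the constant using $\deg\leq 6$). Your closing discussion of the Poincar\'e inequality is accurate but belongs to Theorem~\ref{thm:poinc} rather than to this statement, which concerns only volume; for the volume claims your argument is complete.
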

  \begin{proof} By Theorem~\ref{thm:m1}, since $G$ is quasi-isometric to $\Z^2.$ The volume growth properties for $G$ follow from those for $\Z^2.$
  \end{proof}
  \co Let $G$ be an infinite penny graph with bounded facial degree.  The simple random walk on $G$ is recurrent. Any positive superharmonic function on $G$ is constant.
  \cod
  \begin{proof} By Theorem~\ref{thm:vdb}, the graph $G$ has quadratic volume growth, \eqref{eq:qua}, which yields that the simple random walk on $G$ is recurrent. The second statement is equivalent to the first one.
  \end{proof}
  
  \begin{theorem}\label{thm:poinc}Let $G$ be an infinite penny graph with facial degree bounded above by $D.$ Then the Poincar\'e inequality \eqref{eq:poincare} holds for $C(D).$
\end{theorem}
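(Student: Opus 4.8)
The plan is to deduce the Poincar\'e inequality for $G$ from the corresponding inequality on $\Z^2$ via the quasi-isometry supplied by Theorem~\ref{thm:m1}, exactly in parallel with the proof of Theorem~\ref{thm:vdb}. The point is that the Poincar\'e inequality in the form~\eqref{eq:poincare}, together with the volume doubling property~\eqref{eq:1}, is a quasi-isometry invariant among graphs of bounded vertex degree; this is the content of the cited work \cite{MR1363211}. Since $G$ is a penny graph we have $\deg(x)\le 6$ for all $x\in V$, so $G$ has bounded geometry, and $\Z^2$ trivially satisfies~\eqref{eq:1} and~\eqref{eq:poincare} with absolute constants. By Theorem~\ref{thm:m1} the metric space $(V,d_V)$ is quasi-isometric to $(\Z^2,d_{\Z^2})$, with quasi-isometry constants controlled by $D$ through Theorem~\ref{thm:quasi} (where $C(D)=\tfrac1{2D}$). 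Feeding these into the invariance statement yields the Poincar\'e inequality for $G$ with a constant $C(D)$ depending only on $D$.

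In more detail, the key steps, in order, are the following. First, record that $G$ has bounded vertex degree ($\le 6$) and that, by Theorem~\ref{thm:vdb}, $G$ satisfies the volume doubling property with constant $C_1(D)$; this is needed because the Poincar\'e inequality is a quasi-isometry invariant only when paired with volume doubling (the two together are equivalent to a Sobolev-type/Moser-iteration package, or to the parabolic Harnack inequality). Second, invoke Theorem~\ref{thm:m1}: $(V,d_V)$ and $(\Z^2,d_{\Z^2})$ are quasi-isometric, with the quasi-isometry map $\psi$ and constants $a,C$ expressible in terms of $D$ (combine $\phi$ from Theorem~\ref{thm:quasi} with the standard quasi-isometry between $(\R^2,|\cdot|)$ and $(\Z^2,d_{\Z^2})$). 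Third, note that $\Z^2$ satisfies~\eqref{eq:poincare} with an absolute constant — this is classical (e.g. by comparison with the continuous Poincar\'e inequality on Euclidean balls, or directly by a telescoping/path-averaging argument on the lattice). Fourth, apply the stability result of \cite{MR1363211}: under a quasi-isometry between bounded-degree graphs, the conjunction of volume doubling and the (scale-$R^2$) Poincar\'e inequality is preserved, with the new constants depending only on the old constants and the quasi-isometry parameters. Conclude that $G$ satisfies~\eqref{eq:poincare} with $C=C(D)$.

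I expect the main obstacle to be purely expository rather than mathematical: stating the quasi-isometry invariance in precisely the normalization used in Definition~\ref{defi:poi} (the weak, or "two-ball", Poincar\'e inequality $\sum_{B_R}|f-\overline f|^2\le CR^2\sum_{B_{2R}}|f(y)-f(z)|^2$, rather than the strong one over a single ball). The weak form is the robust one under quasi-isometries precisely because the enlargement from $B_R$ to $B_{2R}$ absorbs the distortion of distances and the finite "jumps" introduced by the quasi-isometry map; combined with volume doubling one may, if desired, recover the strong form, but the weak form already suffices for the applications (Harnack inequality, finite-dimensionality of polynomial-growth harmonic functions). I would therefore phrase the proof as: "By Theorem~\ref{thm:m1}, $G$ is quasi-isometric to $\Z^2$; the volume doubling property and the Poincar\'e inequality~\eqref{eq:poincare} are preserved under quasi-isometry between graphs of bounded vertex degree \cite{MR1363211}, and both hold on $\Z^2$; hence both hold on $G$, with constants depending only on $D$." If one wants a self-contained argument avoiding the black box of \cite{MR1363211}, the alternative is to push functions on $B_{2R}\subset V$ to functions on a comparable ball in $\Z^2$ via $\psi$ (extending by a local average over $\psi$-fibers), apply the lattice Poincar\'e inequality there, and pull back, using Theorem~\ref{thm:quasi} to compare the Dirichlet-type sums and Theorem~\ref{thm:vdb} to compare the volume factors; this is a routine but slightly tedious bookkeeping exercise, which is why I would prefer to cite the stability theorem directly.
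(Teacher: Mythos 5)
Your proposal is correct and follows essentially the same route as the paper: the paper's proof simply notes that $\Z^2$ satisfies the Poincar\'e inequality, that the inequality is a quasi-isometry invariant for bounded-degree graphs by \cite{MR1363211}, and concludes via Theorem~\ref{thm:m1}. Your additional remarks about pairing with volume doubling and about the weak two-ball normalization are sensible elaborations of the same argument rather than a different approach.
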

\begin{proof} Note that the Poincar\'e inequality holds on $(\Z^2,d_{\Z^2}).$ It is well-known, \cite{MR1363211}, that the Poincar\'e inequality is a quasi-isometric invariant for graphs with bounded degree. This yields the result for $G.$
\end{proof}
 Now we can prove Theorem~\ref{thm:main1}.
 \begin{proof}[Proof of Theorem~\ref{thm:main1}] The result follows from the combination of Theorem~\ref{thm:vdb} and Theorem~\ref{thm:poinc}.
 \end{proof}

  By the Moser iteration on graphs, Delmotte \cite{Delmotte97} proved the Harnack inequality for positive harmonic functions. Using that result, we have the following corollary by Theorem~\ref{thm:main1}.
  \co Let $G$ be an infinite penny graph with facial degree bounded above by $D.$ Then there exists $C(D)$ such that for any positive harmonic function on $B_{2R}(x),$ $R\geq 1,$ we have $$\sup_{B_R(x)}f\leq C\inf_{B_R(x)}f.$$
  \cod

 
 \begin{remark} It is well-known, \cite{Delmotte99}, that for a graph with bounded degree, the following are equivalent:
 \begin{enumerate}
 \item The volume doubling property and the Poincar\'e inequality hold.
 \item The parabolic Harnack inequality holds.
 \end{enumerate} This yields that the parabolic Harnack inequality holds for infinite penny graphs with bounded facial degree.
 \end{remark}


Fix a point $x_0\in V.$  For a graph $G=(V,E)$ and any $k\geq 0,$ we denote by $$\HH^k(G):=\left\{u: \Delta u=0, |u(x)|\leq C(1+d(x,x_0))^k, \forall x\in V\right\}$$ the space of harmonic functions of polynomial growth whose growth rate is at most $k.$
Note that the above space is independent of the choice of $x_0.$

On a Riemannian manifold with nonnegative Ricci curvature, the finite dimensionality property of the space of harmonic functions of polynomial growth was conjectured by Yau, and was confirmed by Colding-Minicozzi \cite{ColdingMinicozzi97}, see also \cite{ColdingMinicozzi98,ColdingMinicozzi98Weyl,Li97}. Following these arguments,  Delmotte \cite{Delm97} proved the finite-dimensional property for harmonic functions of polynomial growth on graphs under the assumptions of the volume doubling property and the Poincar\'e inequality, see also \cite{HJLcrelle15}. By this result, we get the following corollary.
 \co Let $G$ be an infinite penny graph with facial degree bounded above by $D.$ Then there exists $C(D)$ such that $$\dim \HH^k(G)\leq Ck^2,\quad \forall k\geq 1.$$  
 \cod
We propose a conjecture on the dimension estimate.
\begin{con}\label{con:de}Let $G$ be an infinite penny graph with facial degree bounded above by $D.$ Then there exists $C(D)$ such that $$\dim \HH^k(G)\leq Ck,\quad \forall k\geq 1.$$
\end{con}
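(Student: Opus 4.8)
\textbf{Proof proposal for Conjecture~\ref{con:de}.}

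The plan is to improve the quadratic bound $\dim\HH^k(G)\leq Ck^2$ from the preceding corollary to a linear bound, by exploiting the planarity of $G$ rather than merely its volume doubling and Poincar\'e properties. For genuine planar lattices such as $\Z^2$ one knows that $\dim\HH^k=O(k)$: on $\Z^2$ the dimension of $\HH^k$ equals $2k+1$, and more generally for planar graphs quasi-isometric to $\R^2$ one expects the sharp growth exponent to be governed by the rotational symmetry of the plane, giving a one-parameter family of "directions" at each fixed degree. The first step is therefore to recall the structure of $\HH^k(\Z^2)$ explicitly (harmonic polynomials in two variables restricted to the lattice, of which there are exactly two linearly independent ones in each homogeneous degree $j$, plus the constants), and to set up the comparison machinery that transports dimension bounds along the quasi-isometry $\phi\colon V\to\R^2$ of Theorem~\ref{thm:quasi}.

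Next I would run the Colding--Minicozzi/Delmotte counting argument in the refined form adapted to planar graphs. The key quantitative input of that argument is a bound of the shape
$$\dim\HH^k(G)\leq C\,\frac{\log\bigl(\text{doubling at scale }R\bigr)}{\log 2}\cdot(\text{combinatorial/Poincar\'e factor}),$$
and in the planar case the crucial additional ingredient is that the number of "independent" harmonic functions one can pack into a fixed ball of radius $R$, when weighted by their growth, is controlled by the \emph{perimeter} $\sim R$ of the ball rather than by its area $\sim R^2$. Concretely, I would argue that a harmonic function on $G$ is determined, up to controlled error, by its restriction to an annulus $B_{2R}(x_0)\setminus B_R(x_0)$, and that on such an annulus in a planar graph quasi-isometric to $\R^2$ the relevant trace space has dimension $O(R)$; iterating this over dyadic scales as in the Poincar\'e-based proof then yields $\dim\HH^k(G)\leq Ck$. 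All the analytic estimates needed (reverse Poincar\'e / mean value inequality, the volume bounds \eqref{eq:qua}) are already available from Theorem~\ref{thm:main1} and Theorem~\ref{thm:vdb}.

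The main obstacle is precisely this last "perimeter versus area" gain: the generic argument of Delmotte \cite{Delm97} only sees $\nu=2$ and hence only gives $k^2$, so one must inject planarity in an essential way, and it is not obvious that quasi-isometry to $\R^2$ alone is enough — the quasi-isometry distorts the annular trace spaces, and controlling the dimension of the trace space after distortion is the delicate point. One natural route is to first establish a \emph{boundary Harnack} or a \emph{discrete Cauchy integral} type representation on penny graphs, using the circle packing structure (every penny graph is, essentially by definition, a circle packing, so the Koebe--Andreev--Thurston theory and the associated discrete complex analysis of Rodin--Sullivan, He--Schramm, etc. apply); this would directly give a one-variable parametrization of harmonic functions of bounded growth and hence the linear bound. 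I expect that making this circle-packing/discrete-analytic approach rigorous, with explicit control of constants in terms of $D$, is the hard part, and it is for this reason that the statement is posed here as a conjecture rather than a theorem.
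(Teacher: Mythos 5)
This statement is posed in the paper as Conjecture~\ref{con:de}; the paper offers no proof of it, so there is nothing to compare your argument against except the conjecture itself. What you have written is a research programme rather than a proof, and you say as much in your final sentence. Since the task is to assess whether the argument closes the statement, the answer is that it does not, and the gap is exactly where you locate it, but it is worth being precise about why the proposed route does not yet work.

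The central unproved claim is the ``perimeter versus area'' step: that a harmonic function in $\HH^k(G)$ is determined, up to controlled error, by a trace on an annulus whose dimension is $O(R)$, and that iterating over dyadic scales converts this into $\dim\HH^k(G)\leq Ck$. Two distinct problems are hidden here. First, the annulus $B_{2R}(x_0)\setminus B_R(x_0)$ contains on the order of $R^2$ vertices by \eqref{eq:qua}, so its trace space has dimension $\sim R^2$, not $\sim R$; the object of dimension $O(R)$ is a single combinatorial sphere $\delta B_R(x_0)$. It is true that a function harmonic in $B_R(x_0)$ is determined by its values on $\delta B_R(x_0)$, hence the space of such functions has dimension $\lesssim R$, but this bound grows with $R$ and by itself says nothing about $\HH^k(G)$: the entire content of the Colding--Minicozzi/Delmotte argument is to decouple the growth exponent $k$ from the scale $R$ via the almost-orthogonality and reverse Poincar\'e estimates, and those estimates only see the doubling exponent $\nu=2$, which is why they stop at $Ck^2$. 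You would need a genuinely new mechanism to show that the ``effective'' boundary data of a degree-$k$ function occupies only $O(k)$ of the $O(R)$ available boundary dimensions uniformly in $R$. Second, the appeal to circle-packing function theory (Rodin--Sullivan, He--Schramm) concerns a discrete analyticity adapted to conductances coming from the packing geometry, not the combinatorial Laplacian $\Delta f(x)=\sum_{y\sim x}(f(y)-f(x))$ used throughout this paper, and transferring a Cauchy-type representation between the two operators with constants depending only on $D$ is itself an open problem here. So the proposal identifies a plausible direction, and correctly diagnoses the obstruction, but it does not constitute a proof; the statement remains a conjecture.
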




\begin{definition} We say that $u:V\times (-\infty,0]\to\R$ is an ancient solution to the heat equation if
$$\partial_t u(x,t)=\Delta u(x,t),\quad \forall x\in V, t\leq 0.$$
\end{definition}
Fix a point $x_0\in V.$ We denote by
$$\mathcal{P}^k(G):=\{\mathrm{ancient\ solution}\ u: |u(x,t)|\leq C(1+d(x,x_0)+\sqrt{|t|})^k, \forall x\in V,t\leq 0\}$$ the space of ancient solutions of polynomial growth with growth rate at most $k.$ 
Note that the above space doesn't depend on the choice of $x_0.$

Ancient solutions of polynomial growth were studied by many authors in Riemannian geometry, \cite{CalleMZ06,Callethesis,LinZhang17,ColdingM19}. Colding and Minicozzi  \cite{ColdingM19} prove a theorem to compare the dimension of the space of ancient solutions of polynomial growth with that of the space of harmonic functions of polynomial growth on Riemannian manifolds. By following their argument, we extended the result to graphs.
\begin{theorem}[\cite{Huaancient19}] Let $G=(V,E)$ be a graph with bounded degree and  of polynomial volume growth. Then for any $k\in \N,$
$$\dim\mathcal{P}^{2k}(G)\leq (k+1)\dim \HH^{2k}(G).$$
\end{theorem}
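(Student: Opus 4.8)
The plan is to follow the Colding--Minicozzi strategy as adapted to graphs by Hua in \cite{Huaancient19}, reducing the dimension count for ancient solutions of polynomial growth to that for harmonic functions of polynomial growth. The starting point is to take separation of variables seriously at the level of dimension counting: if $u(x,t)\in\mathcal{P}^{2k}(G)$ is an ancient solution, then for each fixed $t$ the ``time slice'' $u(\cdot,t)$ is a function on $V$ of polynomial spatial growth, and the heat equation forces the map $t\mapsto u(\cdot,t)$ to have a strong rigidity. The key structural input is that, because the growth rate in $t$ is controlled by $\sqrt{|t|}$ to the power $2k$, i.e.\ by $|t|^k$, the solution behaves like a polynomial of degree at most $k$ in $t$ with coefficients that are themselves harmonic-growth-type functions in $x$.

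First I would fix a large scale $R$ and consider the finite-dimensional vector space $\mathcal{P}^{2k}(G)$, aiming to bound its dimension. The main technical device is a mean-value / reverse-Poincar\'e type inequality for solutions of the heat equation on $G$, available from the parabolic Harnack inequality (which holds on $G$ by Theorem~\ref{thm:main1} and the Remark following Theorem~\ref{thm:poinc}); this lets one control the parabolic $L^2$ norm of $u$ on a space-time cylinder $B_R(x_0)\times[-R^2,0]$ by its values on a larger cylinder, and conversely gives quadratic (in the parabolic sense) volume growth of the space-time regions. Next I would introduce, for each nonnegative integer $j\le k$, the ``$j$-th time derivative at $t=0$'' map, or more robustly the projection onto the degree-$j$ part in $t$: since $\partial_t u=\Delta u$, iterating gives $\partial_t^{j}u=\Delta^{j}u$, and the polynomial-in-$\sqrt{|t|}$ growth bound of order $2k$ implies $\partial_t^{k+1}u\equiv 0$ after testing against the growth estimate (a solution whose $|t|$-growth is $o(|t|^{k+1})$ and which solves the heat equation must be polynomial of degree $\le k$ in $t$). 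Thus every $u\in\mathcal{P}^{2k}(G)$ is uniquely determined by the finite data $(u_0,u_1,\dots,u_k)$ where $u_j(x):=\frac{1}{j!}\partial_t^{j}u(x,0)$, and one checks $u_j=\frac{1}{j!}\Delta^{j}u(\cdot,0)$ with $\Delta u_j = (j+1)u_{j+1}$, so in fact $u$ is determined by $u_0$ alone \emph{together with} the fact that $u_0$ lies in a space of harmonic-type growth — but the chain $\Delta^{k}u_0$ terminating forces $u_0$ to have growth rate at most $2k$ in an appropriate sense. Making this precise, the evaluation-at-$t=0$ map sends $\mathcal{P}^{2k}(G)$ injectively into a space naturally filtered with at most $k+1$ graded pieces, each of which embeds into $\HH^{2k}(G)$, yielding $\dim\mathcal{P}^{2k}(G)\le (k+1)\dim\HH^{2k}(G)$.

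More carefully, the cleanest route is: given $u\in\mathcal{P}^{2k}(G)$, expand $u(x,t)=\sum_{j=0}^{k} t^{j} w_j(x)$ where $w_j(x)=\frac{1}{j!}\partial_t^j u(x,0)$ and the sum terminates by the growth bound (this termination is the crux and uses the parabolic Harnack inequality to propagate a pointwise bound on $\partial_t^{k+1}u$ from cylinders to all of $V\times(-\infty,0]$, then a Liouville-type statement kills it). The heat equation gives the recursion $\Delta w_j=(j+1)w_{j+1}$, so $w_{j+1}$ is determined by $w_j$, and hence the whole solution is determined by $w_0$; moreover $\Delta^{k+1}w_0=0$ and each $w_j$ has polynomial growth of degree at most $2k$ (this follows by combining the space-time growth bound with interior parabolic estimates applied on cylinders of dyadic radii, using the $R^2$-volume growth \eqref{eq:qua}). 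Therefore the linear map $u\mapsto w_0$ is injective from $\mathcal{P}^{2k}(G)$ into the space $\{w:\Delta^{k+1}w=0,\ |w(x)|\le C(1+d(x,x_0))^{2k}\}$ of polynomial-growth solutions of $\Delta^{k+1}w=0$ of rate $2k$; a standard filtration argument (peeling off $\Delta w,\Delta^2 w,\dots$) bounds the dimension of that space by $(k+1)\dim\HH^{2k}(G)$, since each successive quotient embeds via $\Delta$ into a space of harmonic functions of growth rate $\le 2k$. Assembling these estimates gives the claimed bound.

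The main obstacle I anticipate is the \emph{polynomial-in-$t$} reduction — proving that $\partial_t^{k+1}u\equiv 0$ — rigorously in the discrete parabolic setting. One needs a parabolic mean value inequality bounding $\sup_{B_R(x_0)\times[-R^2/2,0]}|\partial_t^{k+1}u|$ by a parabolic average of $|\partial_t^{k+1}u|$ over $B_{2R}(x_0)\times[-R^2,0]$, which in turn is controlled by the parabolic average of $|u|$ (derivatives of solutions are solutions, and discrete time-derivatives are handled via $\partial_t=\Delta$ so no genuine loss occurs); combined with the growth hypothesis $|u|\le C(1+d(x,x_0)+\sqrt{|t|})^{2k}$ this forces $\sup_{B_R(x_0)\times[-R^2/2,0]}|\partial_t^{k+1}u|\to 0$ as $R\to\infty$, hence $\partial_t^{k+1}u\equiv 0$. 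Getting the powers of $R$ to balance correctly (using $\nu=2$ from \eqref{eq:qua} so that the parabolic cylinder has ``dimension'' $2+2=4$ and the $L^2\to L^\infty$ iteration loses only a fixed power) is the delicate bookkeeping; once that is in hand, the rest is linear algebra with the filtration. I would also need to confirm that each graded piece $w_j$ genuinely has growth rate $\le 2k$ rather than something larger, which again relies on the interior parabolic estimates and the quadratic volume growth, but this is more routine given the first step.
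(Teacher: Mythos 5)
First, note that the paper itself gives no proof of this statement: it is quoted verbatim from \cite{Huaancient19}, with only the remark that it is obtained ``by following the argument'' of Colding--Minicozzi \cite{ColdingM19}. Measured against that strategy, your architecture is the right one and matches the cited proof: (i) show that $\partial_t^{k+1}u\equiv 0$, so that $u(x,t)=\sum_{j=0}^{k}t^{j}w_j(x)$ with $\Delta w_j=(j+1)w_{j+1}$ and hence $w_j=\frac{1}{j!}\Delta^{j}w_0$; (ii) deduce that $u\mapsto w_0$ injects $\mathcal{P}^{2k}(G)$ into a space of functions annihilated by $\Delta^{k+1}$ whose iterated Laplacians all have growth rate at most $2k$; (iii) filter that space by $\ker\Delta^{j}$ and embed each graded piece into $\HH^{2k}(G)$ via $\Delta^{j}$, giving the factor $k+1$. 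Steps (ii) and (iii) as you describe them are correct (for the growth of the individual $w_j$ you do not even need parabolic estimates: evaluating $u(x,\cdot)$ at $k+1$ fixed negative times and inverting the Vandermonde matrix already gives $|w_j(x)|\le C(1+d(x,x_0))^{2k}$).

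The substantive problem is in step (i), where you lean on the parabolic mean value/Harnack inequality. That inequality is available for penny graphs with bounded facial degree (via Theorem~\ref{thm:main1} and Delmotte's theorem), so your argument does close \emph{for the application in this paper}: the Caccioppoli chain gives $\int\!\!\int_{Q_R}(\partial_t^{k+1}u)^2\le CR^{-2}|B_{cR}|$, and the parabolic mean value inequality together with $|B_R|\asymp R^2$ from \eqref{eq:qua} then yields $\sup_{Q_{R/2}}|\partial_t^{k+1}u|^2\le CR^{-4}\to 0$. But the theorem as stated assumes only bounded degree and polynomial volume growth, under which neither the parabolic Harnack inequality nor the mean value inequality is available; your proof therefore establishes a strictly weaker statement than the one quoted. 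The proof in \cite{ColdingM19,Huaancient19} deliberately avoids any Harnack-type input: it runs entirely in $L^2$, using only reverse Poincar\'e (Caccioppoli) inequalities obtained by summation by parts against cutoffs, and closes the borderline case with a monotonicity/Liouville argument (for a solution $w$ of the heat equation, $t\mapsto\|w(\cdot,t)\|_{\ell^2}^2$ is non-increasing, so a finite or suitably slowly growing space-time $L^2$ norm forces $w\equiv0$) rather than a pointwise sup bound. Relatedly, your assertion that the growth hypothesis ``forces $\sup|\partial_t^{k+1}u|\to0$'' is exactly the point where the exponents are borderline ($R^{d_V-2}$ for the space-time $L^2$ norm, which is merely bounded when $d_V=2$), so without the doubling-and-lower-volume input this step would fail as written; you should either restrict the claim to the penny-graph setting or replace it with the $L^2$ monotonicity endgame.
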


This yields the corollary for penny graphs.
\co Let $G$ be an infinite penny graph with bounded facial degree.
Then for any $k\geq 1,$$$\dim\mathcal{P}^{2k}(G)\leq C(D) k^3.$$
\cod
\begin{remark} If Conjecture~\ref{con:de} holds, then the above result will be improved to that for any $k\geq 1,$$$\dim\mathcal{P}^{2k}(G)\leq C(D) k^2.$$
\end{remark}


\bibliography{py1}
\bibliographystyle{alpha}

\bigskip
\bigskip

\bigskip

\end{document}